\newcommand{\sm}{\setminus}
\newcommand{\conn}{\text{co}}
\newcommand{\PP}{\mathbb{P}}
\newcommand{\Ff}{\mathscr F}
\newcommand{\Pp}{\mathscr P}
\newtheorem{theorem}{Theorem}
\newtheorem{lemma}[theorem]{Lemma}
\newtheorem{corollary}{Corollary}
\newtheorem{conjecture}{Conjecture}
\newcommand\numberthis{\addtocounter{equation}{1}\tag{\theequation}}
\newenvironment{nouppercase}{

	\renewcommand{\uppercasenonmath}[1]{}}{}
\let\@mkboth\@gobbletwo
\def\@oddhead{{\rightmark}\hfil\thepage}
\def\@evenhead{\thepage\hfil{\rightmark}}
\title{{\Large Vertex gluing preserves the bunkbed conjecture}}
\author{\normalsize Paul Meunier$^1$}
\address{$^1$ Department of Mathematics: Analysis, Logic and Discrete Mathematics, Ghent University, Krijgslaan 281, 9000 Gent, Belgium (formerly KU Leuven, Department of Mathematics, Celestijnenlaan 200B box 2400, BE-3001 Leuven), research supported by the grant 11PAL24N funded by the Research Foundation Flanders (FWO)}
\author{\normalsize Pegah Pournajafi$^2$}
\address{$^2$Chaire Combinatoire, Collège de France, Université PSL, 75005, Paris, France}
\begin{document}
	
	\begin{abstract}
		We prove that the bunkbed conjecture is preserved under gluing along a vertex. As a consequence, every minimal counterexample is 2-connected. More generally, for any class of graphs closed under taking 2-connected components, the study of the bunkbed conjecture reduces to the case of 2-connected graphs in that class. In particular, the conjecture holds for forests and block graphs.
	\end{abstract}
	
	\begin{nouppercase}
		\maketitle
	\end{nouppercase}
	
	\section{Introduction}
	Percolation theory has the charming capacity to give rise to a variety of simple-looking problems that remain open for decades. 
	One striking example is the bunkbed conjecture, a folklore conjecture dating back to the 1980s, for which a counterexample was recently announced in~\cite{GladPakZim2024}. 
	This motivates the study of structural properties of bunkbed graphs and of operations under which related inequalities are preserved.	
	
	\smallskip
	
	The bunkbed conjecture concerns connectivity properties on so-called ``bunkbed graphs'', that is, graphs of the form $G = H \Box K_2$, where $\Box$ denotes the box product of graphs and $K_2$ is the complete graph on two vertices. 
	Informally, it states that after an edge percolation process on $G$, the probability that two vertices are connected by a path is higher when they lie in the same layer than when they lie in different layers (see~\Cref{sec:preli_notations} for a formal statement).

	Similar inequalities on bunkbed graphs are known in other probabilistic models, including electrical resistances and discrete random walks~\cite{BollBright1997}, continuous random walks~\cite{Haggstrom1998}, as well as the Ising and random cluster models~\cite{Haggstrom2003}. 
	In the percolation setting, the general conjecture is false, and only a limited number of positive results are known. 
	These include extremal cases where the percolation parameter is sufficiently close to 0 or 1~\cite{Hollom2024, HutchKentNizi2023}, or when the graph is complete~\cite{Buyer2016, Buyer2018, HintLamm2019} or close to being complete (complete bipartite or complete minus one edge)~\cite{Rich2022}.

	Our main result is the following.
	
	\begin{theorem}\label{intro_main_result}
		If the general (resp.\ semihomogeneous) bunkbed conjecture holds for two graphs, then it also holds for any graph obtained by gluing them along a vertex.
	\end{theorem}
	
	As immediate consequences, any minimal counterexample is 2-connected. 
	More generally, for any class of graphs stable under taking 2-connected components, it suffices to verify the conjecture on its 2-connected graphs. 
	In particular, the general bunkbed conjecture holds for forests, and the semihomogeneous version holds for block graphs.

	Since the first version of this work, related results have appeared in the literature, including a new proof of the conjecture for forests~\cite{Donderwinkel2025} as well as a proof for cactus graphs~\cite{Denart2025}.

	\section{Preliminaries and notation}\label{sec:preli_notations}
	
	Several formulations of the bunkbed conjecture appear in the literature. 
	We briefly recall the three main variants before introducing the notation used in this article. 
	We refer to them, in decreasing order of generality, as the general, semihomogeneous, and homogeneous bunkbed conjectures (see their definitions under the statement of~\Cref{cjt:bunkbed}).

	The earliest formulation of the ``bunkbed conjecture'' appears to be in an article by Häggström~\cite{Haggstrom2003}, which is an English version of his earlier Swedish paper~\cite{Haggstrom2002}. 
	Without the name ``bunkbed'', this question already appears as Question 3.1 in~\cite{Haggstrom1998} by the same author, where it is described as part of the probabilistic folklore. 
	Van den Berg and Kahn~\cite{BergKahn2001} also consider a related statement, attributed to discussions with Kasteleyn around 1985. 
	This corresponds to the general version, while Häggström’s formulation corresponds to the homogeneous version.

	Since then, other versions of the conjecture have been studied. 
	In~\cite{RudzSmyth2016}, relationships between different formulations are established. 
	In particular, the authors show that the semihomogeneous version implies the general one, but do not know whether the homogeneous version implies any other. 
	However, this implication does not hold at the level of individual graphs: indeed, the general conjecture is false for complete graphs with sufficiently many vertices, whereas the semihomogeneous version is true for complete graphs~\cite{HintLamm2019}. 
	Since block graphs contain complete graphs, this provides a new class of graphs for which the semihomogeneous version holds but not the general one (see~\Cref{cor:3}).
	
	\medskip
	
	All graphs in this article are finite and simple. We denote the vertex set and the edge set of a graph $G$ by $V(G)$ and $E(G)$ respectively. We often write $xy$ for an edge $\{x,y\} \in E(G)$. 
	
	Let us recall the edge-percolation model.   
	Let $G$ be a graph and let $\mu \colon E(G) \to [0,1]$ be a function. We refer to $ \mu $ as a \emph{weight} on $G$. 
	Define the probability space $(\Omega,\Ff,\PP)$ with respect to $G$ and $\mu$ by setting $\Omega = \Pp(E(G))$, $\Ff = \Pp(\Omega)$ (where $\Pp$ indicates the power set), and defining $\PP \colon \Ff \to [0,1]$ to be the probability measure given on atoms by $\PP(X) = \left(\prod_{e\in X} \mu(e)\right)\left(\prod_{e\notin X} (1-\mu(e))\right)$.
	This corresponds to associating to every edge of the graph a boolean random variable, representing either a closed or open state, where the edge $e$ is open with probability $\mu(e)$ and all edges are independent from each other. 
	
	Given two vertices $x, y\in V(G)$, we define the event $(x\sim_G y)$, or $(x\sim y)$ when it is clear to which graph we are referring to, to be the event consisting of all $K\subseteq E(G)$ for which $x$ and $y$ are in the same connected component of $(V(G),K)$, or equivalently, where there is a path from $x$ to $y$ in $K$. 
	
	In our context, to a pair $(G,\mu)$ consisting of a graph and a weight on it, we always implicitly associate the probability space $(\Omega, \Ff, \PP)$ defined above.  
	We often denote by $\PP_{G,\mu}$ the probability measure $\PP$ to emphasise on the choice of $G$ and $\mu$, especially when working with the same graph but different weights.
	
	The \emph{bunkbed graph} of $ G $, denoted by $ BB(G) $, is the graph $ G \Box K_2 $ (where $ \Box $ denotes the box product of graphs). In other words, we set
	$V(BB(G)) = V(G)\times \{0,1\}$ and 
	$$E(BB(G)) = \{ (x,i)(y,i) \mid xy \in E(G),\ i\in \{0,1\} \} \cup \{(x,0)(x,1) \mid x\in V(G) \}.$$ 
	Hence, $BB(G)$ consists of two copies of $G$, where one adds an edge between the two copies of each vertex. 
	In the rest of the article, for $x\in V(G)$, we denote its two copies $(x,0)$ and $(x,1)$ by $x^-$ and $x^+$ respectively--see \Cref{fig:bunkbed-operation}. A weight $\mu \colon E(BB(G)) \to [0,1]$ on $BB(G)$ is said to be \textit{symmetric} if for every edge $xy \in E(G)$, we have $\mu(x^-y^-) = \mu(x^+y^+)$.

	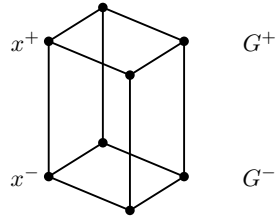
\begin{figure}
		\centering
		\begin{tikzpicture}
			\scalebox{.9}{
				\draw[thick] (.2, 0) -- (1, .5) -- (-.2, 1) -- (-1, .5) -- cycle;
				
				\draw[thick] (0.2, 2) -- (1, 2.5) -- (-0.2, 3) -- (-1, 2.5) -- cycle;
				
				\draw[thick] (0.2, 0) -- (0.2, 2);  
				\draw[thick] (1, .5) -- (1, 2.5);  
				\draw[thick] (-.2, 1) -- (-.2, 3);  
				\draw[thick] (-1, .5) -- (-1, 2.5); 
				
				\fill (0.2, 0) circle (2pt);
				\fill (1, .5) circle (2pt);
				\fill (-0.2, 1) circle (2pt);
				\fill (-1, .5) circle (2pt);
				\fill (0.2, 2) circle (2pt);
				\fill (1, 2.5) circle (2pt);
				\fill (-0.2, 3) circle (2pt);
				\fill (-1, 2.5) circle (2pt);
				
				\node at (-1, .5) [left] {$ x^- $}; 
				\node at (-1, 2.5) [left] {$ x^+ $};  
				\node at (2.5, .5) [left] {$ G^- $}; 
				\node at (2.5, 2.5) [left] {$ G^+ $}; 
			}
		\end{tikzpicture}
		\caption{The bunkbed graph of $ G = C_4 $.} \label{fig:bunkbed-operation}
	\end{figure}

	\medskip 
	Let us now state the bunkbed conjecture. 
	As mentioned earlier, there are several versions of the conjecture, and \Cref{cjt:bunkbed} is the most general one. 
	
	\begin{conjecture}[Bunkbed conjecture]\label{cjt:bunkbed}
		Let $G$ be a graph and let $\mu \colon E(BB(G)) \to [0,1]$ be a symmetric weight on $BB(G) $. 
		Then for every $x$ and $y \in V(G)$, we have
		$\PP_{BB(G),\mu}(x^- \sim y^-) \geq \PP_{BB(G),\mu}(x^-\sim y^+)$.
	\end{conjecture}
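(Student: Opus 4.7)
My plan is to attack \Cref{cjt:bunkbed} by a coupling argument organised around the natural layer-swapping involution $\sigma \colon BB(G) \to BB(G)$ sending $z^-$ to $z^+$ for every $z \in V(G)$. Because $\mu$ is symmetric on horizontal edges and because $\sigma$ fixes every vertical edge setwise, the induced action of $\sigma$ on subsets of $E(BB(G))$ preserves the measure $\PP_{BB(G),\mu}$. Writing $A = \{K : x^- \sim_K y^-\}$ and $B = \{K : x^- \sim_K y^+\}$, the conjecture reduces to exhibiting a probability-preserving injection $\phi \colon B \sm A \to A \sm B$.

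First I would fix total orders on $V(G)$ and on $E(G)$, and for each $K \in B$ choose the canonical $x^-$-to-$y^+$ path $P(K)$ in $K$ as the lexicographically least open such path. Such a path must cross an odd number of vertical edges; let $\{u^-, u^+\}$ be the first vertical edge that $P(K)$ uses. I would then define $\phi(K)$ by applying $\sigma$ to every edge of $K$ whose endpoints lie ``beyond'' this crossing, i.e., in the component reached from $u^+$ after traversing $\{u^-,u^+\}$. Weight-preservation is immediate from the symmetry of $\mu$ and the fact that $\sigma$ fixes vertical edges setwise; landing in $A$ follows because the reflected tail of $P(K)$ now joins $u^-$ to $y^-$ via the copy of the path originally running from $u^+$ to $y^+$. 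Checking that applying the analogous construction starting from a configuration in $A \sm B$ inverts $\phi$ would then complete the formal coupling setup.

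The genuinely hard step, and the main obstacle, is to ensure that $\phi(K) \notin B$ and that the map is injective. The danger is twofold: either $K$ already contains an alternative $x^-$-to-$y^+$ route disjoint from $P(K)$, which would survive the reflection and keep $\phi(K)$ inside $B$; or the reflection itself creates an unintended new $x^-$-to-$y^+$ connection by splicing the reflected ``beyond-$u$'' piece with an unreflected ``before-$u$'' piece through some other open vertical edge. Controlling these interactions seems to require a genuine separation of the graph at the pivotal vertex $u$: if one can condition on the configuration on one side of $u$ and then run independent percolation on the other side, the reflection becomes well-defined and injective. This suggests that a purely combinatorial reflection attack is likely to succeed most cleanly under a structural hypothesis (such as the existence of a cut-vertex in $G$) that isolates the reflected piece from the rest, which is precisely the setting of this paper's main theorem; in the absence of such rigidity, the naive surgery appears to require genuinely new ideas.
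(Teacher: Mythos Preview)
The statement you are attempting to prove is \emph{the Bunkbed Conjecture itself}, and the paper does not prove it. In fact the paper explicitly records that Gladkov, Pak, and Zimin have announced a counterexample, so no correct proof of \Cref{cjt:bunkbed} in full generality can exist. What the paper establishes is the much more modest closure statement (\Cref{thm:main-vertex-cut}): if two graphs satisfy the conjecture, then so does their vertex-sum. That argument never attempts a global reflection coupling; it conditions on the state of one side of the cut vertex and reduces to the assumed bunkbed inequality on each piece separately.

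Your own proposal is honest about where it breaks: the map $\phi$ you describe is not, in general, an injection from $B\setminus A$ into $A\setminus B$. The two failure modes you list are real and are exactly why reflection arguments of this type have never settled the conjecture. If $K$ contains a second $x^-$--$y^+$ path avoiding the pivotal vertical edge, reflecting past $u$ leaves that path intact and $\phi(K)$ stays in $B$; and even absent such a path, the reflected tail can meet the unreflected head through another open vertical edge and manufacture a new $x^-$--$y^+$ connection. There is no canonical way to repair this without additional structure, and the announced counterexample shows that no repair is possible in general. Your closing observation---that a cut vertex would isolate the reflected piece and make the surgery clean---is correct in spirit, but note that even in that setting the paper does not argue via reflection; it simply absorbs one side into a modified weight on the vertical edge $v^-v^+$ and invokes the hypothesis on the remaining piece.
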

	
	Given a pair $(G,\mu)$ where $G$ is a graph and $\mu \colon E(BB(G)) \to [0,1]$ is a symmetric wheight, we say that the pair \emph{satisfies the bunkbed conjecture} if the statement above holds. 
	If it holds for every symmetric wheight $\mu \colon E(BB(G)) \to [0,1]$, we say that the graph $G$ \emph{satisfies the (general) bunkbed conjecture}. 
	We say that $G$ satisfies the \emph{semihomogeneous bunkbed conjecture} if the statement above holds for all symmetric wheights which are constant on horizontal edges. 
	Finally we say that $G$ satisfies the \emph{homogeneous bunkbed conjecture} if the statement above holds for all constant wheights. 
	In the literature it is common to require the vertical edges to have a wheight of 0 or 1, which corresponds to conditioning on whether these edges are present or not in the percolation process. 
	In~\cite[Theorem 6.1]{GladPakZim2024}, the authors introduce a counterexample to the homogeneous version, which also refutes the semihomogeneous and general versions.

	For sets, we write $C = A \sqcup B$ to show that the intersection of $A$ and $B$ is empty (notice that this does not mean that we have a partition of $ C $ as $ A $ and $ B $ are allowed to be the empty set). For a function $ f $ and a subset $ A $ of its domain, we denote the restriction of $ f $ to $ A $ by $ f|_A$. We may simply write $ f $ instead of $ f|_A $ if it is clear from the context which restriction is set on the domain. For a graph $ G $ and a set $ X \subseteq V(G) $, we denote by $ G[X] $ the subgraph of $ G $ \emph{induced} by $ X $--that is, the subgraph whose vertex set is $ X $ and whose edge set is the set of edges with both end-points in $ X $. 
	
	Let $ K \subseteq E(G) $ and let $ x, y \in V(G) $. We write $\conn_{K}(x,y)$ for a fixed logical formula stating that  there exists a path between $ x $ and $ y $ consisting only of edges of $ K $. Thus, $ \neg \conn_{K}(x,y)$ states that there exists no path between $x$ and $y$ using only edges of $ K$. 
	
	A \emph{cut vertex} in $ G $ is a vertex $ v \in V(G) $ such that the number of connected components of $G \setminus \{v\} $ is strictly bigger than the number of connected components of $ G $. Let $ G_1 $ and $ G_2 $ be two graphs and let $ v_i \in V(G_i) $. \emph{Gluing} $G_1$ and $G_2$ along $v_1 $ and $v_2$ is to obtain a graph by first taking the union of $ G_1 $ and $ G_2 $ and then identifying $ v_1 $ and $v_2$. Notice that the vertex obtained from this identification is a cut vertex if $ v_i $ is not an isolated vertex in $G_i$ for $i\in\{1,2\}$.
	
	\section{Gluing along a vertex}

	Let us fix the notation for the rest of the article.
	
	Let $\overline F$ be a graph and let $v \in V(\overline F)$ be a cut vertex. 
	Let $V_1, \dots, V_k$ (where $k \geq 2$) be the vertex sets of the connected components of $\overline F \sm \{v\}$. 
	Fix $ I \subseteq \{1, \dots, k\} $. Set $\overline G = \overline F[\cup_{i \in I} V_i \cup \{v\}]$ and $\overline H = \overline F[\cup_{i \notin I} V_i \cup \{v\}]$. 
	Notice that $ \overline F $ is the graph obtained by gluing $ \overline G $ and $ \overline H $ along a vertex (namely, $v$).
	Now set $F = BB(\overline F)$, $G = BB(\overline G)$, and $H = BB(\overline H)$. 
	Finally, let $H_0 = H\sm \{v^-v^+\}$. 
	See \Cref{fig:fixed-notation}. Notice that every path from a vertex $ a \in G $ to a vertex $ b \in H $ goes through either $ v^- $ or $ v^+$. 
	
	\begin{figure}
		\centering
		\begin{tikzpicture}
			\scalebox{.9}{
				\draw[double, double distance=4pt] (-1.5, 1.5) -- (-1.5, 0.5);
				\draw[double, double distance=4pt] (1.5, 1.5) -- (1.5, 0.5);
				\draw (0,0) -- (0,2);

				\draw[thick, black] (-1.5, 0) ellipse (2cm and .5cm); 
				\draw[thick, black] (1.5, 0) ellipse (2cm and .5cm); 
				
				\node at (-1.5, 0) {$ G^- $};
				\node at (1.5, 0) {$ H^- $};
				
				\fill (0.0, 0) circle (1pt);
				\node at (-.08, -.1) [right] {$ v^- $};
				
				\node at (-4.5, 0) {$ F^- $};

				\draw[thick, black] (-1.5, 2) ellipse (2cm and .5cm); 
				\draw[thick, black] (1.5, 2) ellipse (2cm and .5cm); 
				
				\node at (-1.5, 2) {$ G^+ $};
				\node at (1.5, 2) {$ H^+ $};
				
				\fill (0.0, 2) circle (1pt);
				\node at (-.08, 1.9) [right] {$ v^+ $};
				
				\node at (-4.5, 2) {$ F^+ $};
			}		
		\end{tikzpicture}
		\caption{The notation for Section 3.} \label{fig:fixed-notation}
	\end{figure}
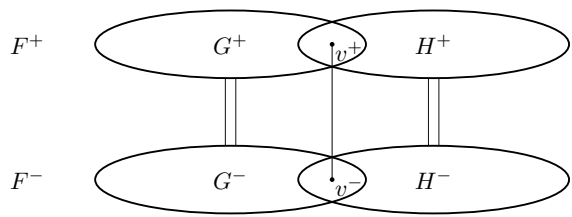
	
	In the rest of the text, whenever we write $ G$, $H$, $H_0 $, etc.\ we refer to the notation above.
	
	\medskip
	
	Our goal is to prove \Cref{thm:main-vertex-cut} together with Corollaries \ref{cor:1} and \ref{cor:2}. The idea behind \Cref{thm:main-vertex-cut} is that if the bunkbed conjecture is true for $ \overline{G} $ and $ \overline{H} $, then it is true for $ \overline F $ as well. In order to prove that
	$\PP(x^- \sim y^+) \leq \PP(x^- \sim y^-)$, for every $ x, y \in V(F) $, we distinguish two cases. First, in \Cref{lem:mainthm-case1}, we obtain the inequality when $ x $ and $ y $ are on the same `side' of $ v $, i.e.\ when they are both in $ V(\overline H) $ for instance. Then, in \Cref{lem:mainthm-case2}, we treat the case where $ x $ and $ y $ are on two different `sides' of $ v $, i.e.\ when one is in $V(\overline  G)$ and the other in $V(\overline  H)$.

	\subsection{First case}
	
	In this subsection, we study the case where $ x $ and $ y $ are both vertices of $ \overline H $. The idea is that the situation then becomes equivalent to working solely in $ H$ but with a different weight.
	
	\begin{lemma}\label{lem:change_weight}
		Let $\mu$ be a weight on $F$. 
		Define the following weight $\mu'$ on $H$: for $e \in E(H)$, set
		$$
		\mu' (e) = \begin{cases}
			\mu(e) & e \neq v^-v^+ \\
			\PP_{G, \mu|_G}(v^- \sim v^+) & e = v^-v^+
		\end{cases}.
		$$ \enlargethispage{\baselineskip} 
		Then, for every $x, y \in V(H)$, we have 
		$ \PP_{F, \mu}(x \sim_F y) = \PP_{H, \mu'}(x \sim_H y)$.
	\end{lemma}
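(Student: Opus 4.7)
The plan is to condition on which edges of $F$ are open, grouping them according to whether they lie in $G$, in $H$, or are the shared rung $v^-v^+$. Since $v$ is a cut vertex of $\overline F$, one checks that $E(F)=E_G\sqcup E_H\sqcup\{v^-v^+\}$, where $E_G:=E(G)\sm\{v^-v^+\}$ and $E_H:=E(H)\sm\{v^-v^+\}$; hence every $K\subseteq E(F)$ splits uniquely as $K=K^G\sqcup K^H\sqcup K^v$ with $K^G\subseteq E_G$, $K^H\subseteq E_H$, $K^v\subseteq\{v^-v^+\}$. By edge independence, $\PP_{F,\mu}(K)=\PP_\mu(K^G)\PP_\mu(K^H)\PP_\mu(K^v)$.

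The heart of the proof is a purely combinatorial reduction: for $x,y\in V(H)$, the event $x\sim_F y$ via $K$ occurs if and only if $x\sim_H y$ holds via $K^H\cup A$, where $A=\{v^-v^+\}$ when $\conn_{K^G\cup K^v}(v^-,v^+)$ holds in $G$ and $A=\emptyset$ otherwise. I would prove this by exploiting $V(G)\cap V(H)=\{v^-,v^+\}$: any path in $F$ between two vertices of $H$ decomposes into maximal $H$-arcs and maximal $G$-excursions, each excursion entering and leaving at $v^-$ or $v^+$. Excursions returning to the same endpoint can be deleted; excursions joining $v^-$ and $v^+$ witness $\conn_{K^G\cup K^v}(v^-,v^+)$ and may be collapsed into the single edge $v^-v^+$ viewed inside $H$. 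The converse direction is symmetric: any use of $v^-v^+$ by an $H$-path (when $A=\{v^-v^+\}$) is expanded back into a $G$-path from $v^-$ to $v^+$.

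Summing over $K^G$ and $K^v$ with $K^H$ held fixed, the indicator from the reduction depends on $(K^G,K^v)$ only through $c:=\mathbf{1}[\conn_{K^G\cup K^v}(v^-,v^+)]$. By definition of $\mu'$, $\sum_{K^G,K^v}\PP_\mu(K^G)\PP_\mu(K^v)\,c=\PP_{G,\mu|_G}(v^-\sim v^+)=\mu'(v^-v^+)$; note this correctly accounts for both the direct contribution of the rung ($K^v=\{v^-v^+\}$ forces $c=1$) and the indirect contribution through $E_G$. Substituting back, $\PP_{F,\mu}(x\sim_F y)$ becomes
\[\sum_{K^H\subseteq E_H}\PP_\mu(K^H)\Bigl(\mu'(v^-v^+)\,\mathbf{1}[x\sim_H y\text{ via }K^H\cup\{v^-v^+\}]+(1-\mu'(v^-v^+))\,\mathbf{1}[x\sim_H y\text{ via }K^H]\Bigr),\]
and the bracket is exactly the conditional sum over the status of the edge $v^-v^+$ in $H$ under $\mu'$, so the total equals $\PP_{H,\mu'}(x\sim_H y)$ since $\mu'=\mu$ on $E_H$.

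The main obstacle is the reduction lemma itself, and in particular the careful decomposition of paths into $H$-arcs and $G$-excursions while tracking the dual role of the edge $v^-v^+$, which lies in both $E(G)$ and $E(H)$ but is a single edge of $F$ with a single random state. Once this lemma is in hand, the remainder of the argument is a routine unfolding of conditional probabilities.
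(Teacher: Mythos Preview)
Your argument is correct, and it reaches the conclusion by a somewhat different route than the paper.

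The paper partitions the event $(x\sim_F y)$ into three disjoint pieces $A_1,A_2,A_3$ according to whether a connection already exists in $H_0=H\sm\{v^-v^+\}$ or else must be routed through $G$ via $v^-\to v^+$ or $v^+\to v^-$; it then partitions $(x\sim_H y)$ analogously into $B_1,B_2,B_3$ and checks termwise that $\PP_{F,\mu}(A_i)=\PP_{H,\mu'}(B_i)$. Your approach instead proves a single combinatorial reduction lemma---by path surgery, replacing each $G$-excursion between $v^-$ and $v^+$ by the rung $v^-v^+$---and then does one conditional-probability computation. The two arguments encode the same partition (your case $c=0$ is the paper's $A_1$, your case $c=1$ with the rung genuinely needed is $A_2\sqcup A_3$), but your packaging is more conceptual: it makes transparent why $\mu'(v^-v^+)=\PP_{G,\mu|_G}(v^-\sim v^+)$ is the ``effective conductance'' of the $G$-side, and it avoids the somewhat repetitive matching of six separate sums. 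The paper's version, on the other hand, is entirely elementary and does not require stating or proving the path-decomposition lemma carefully. One small point worth making explicit in your write-up: your $E_H$ coincides with the paper's $E(H_0)$, and the paper's choice to absorb the rung into $E(G)$ (writing $E(F)=E(H_0)\sqcup E(G)$) is equivalent to your three-way split $E_G\sqcup E_H\sqcup\{v^-v^+\}$ once one observes that $\conn_{K^G\cup K^v}(v^-,v^+)$ is exactly $\conn_{K\cap E(G)}(v^-,v^+)$.
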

	
	\begin{proof} 
		Recall that $H_0 = H \sm \{v^-v^+\}$. 
		Notice that $E(F) = E(H_0) \sqcup E(G)$.  
		Let $ A $ be the event $ (x \sim_F y) $ and set 
		\begin{align*}
			A_0 &= \{ K \subseteq E(F) \mid \neg\conn_{K\cap E(H_0)}(x,y), \conn_{K}(x,y) \},\\
			\text{and } A_1 &= \{ K \subseteq E(F) \mid \conn_{K\cap E(H_0)}(x,y) \}.
		\end{align*}
		Notice that $A = A_0 \sqcup A_1$. 
		
		Now notice that if for a given $K \subseteq E(F)$, we have $\neg\conn_{K\cap E(H_0)}(x,y)$ and $\conn_K(x,y)$, then every path between $x$ and $y$ in $K$ uses edges from $G$. Therefore, we have $A_0 = A_2 \cup A_3$, where  
		\begin{align*}
			A_2 &= \{ K \subseteq E(F) \mid \neg\conn_{K\cap E(H_0)}(x,y), \conn_{K\cap E(H_0)}(x,v^-), \conn_{K\cap E(H_0)}(y,v^+), \conn_{K\cap E(G)}(v^-,v^+) \} \\
			\text{and }
			A_3 & = \{  K \subseteq E(F) \mid \neg\conn_{K\cap E(H_0)}(x,y), \conn_{K\cap E(H_0)}(x,v^+), \conn_{K\cap E(H_0)}(y,v^-), \conn_{K\cap E(G)}(v^-,v^+) \}.
		\end{align*}
		Moreover, if there exists $ K \in  A_2 \cap A_3 $, then we have $ \conn_{K\cap E(H_0)}(x,v^+) $ and $\conn_{K\cap E(H_0)}(y,v^+)$ which would imply that we have $ \conn_{K\cap E(H_0)}(x,y) $, contradicting the condition  $\neg\conn_{K\cap E(H_0)}(x,y) $. Hence,  $A_2 \cap A_3 = \varnothing $.
		Therefore, we obtain $A = A_1 \sqcup A_2 \sqcup A_3$, and, as a result, we have 
		\begin{align*}
			\PP_{F, \mu}(A) = \PP_{F, \mu}(A_1) + \PP_{F, \mu}(A_2) + \PP_{F, \mu}(A_3). \numberthis \label{eq:Asplit}
		\end{align*}
		Now, notice that $E(F) = E(H_0)\sqcup E(G)$. 
		We have:
		\begin{align*}
			\PP_{F, \mu}(A_1) = \sum_{K \in A_1} \PP_{F,\mu}(K) &= \sum_{\substack{K_1 \subseteq E(H_0) \\ K_2 \subseteq E(G) \\ \conn_{K_1}(x,y)}} \PP_{F,\mu}(K_1 \sqcup K_2) \\
			&= \sum_{\substack{K_1 \subseteq E(H_0) \\ K_2 \subseteq E(G) \\ \conn_{K_1}(x,y)}} \PP_{H_0,\mu}(K_1) \PP_{G, \mu}(K_2)   \\ 
			&= \left(\sum_{\substack{K \subseteq E(H_0) \\ \conn_{K}(x,y)}} \PP_{H_0,\mu}(K)\right) \left(\sum_{K \subseteq E(G)} \PP_{G, \mu}(K) \right)\\
			&= \PP_{H_0, \mu}(x \sim_{H_0} y), \numberthis \label{eq:A1_calc}
		\end{align*}
		where the last line follows from the definition of $\PP_{H_0, \mu_0}(x \sim_{H_0} y)$ and the fact that $\sum_{K \subseteq E(G)} \PP_{G, \mu}(K) = 1$.  
		Moreover, we have:
		\begin{align*}	
			\PP_{F, \mu}(A_2) = \sum_{K \in A_2} \PP_{F, \mu}(K) &= \sum_{\substack{K_1 \subseteq E(H_0), K_2 \subseteq E(G) \\ \neg\conn_{K_1}(x,y), \conn_{K_1}(x,v^-), \conn_{K_1}(y,v^+) \\ \conn_{K_2}(v^-,v^+)}} \PP_{F, \mu}(K_1 \sqcup K_2) \\
			& = \sum_{\substack{K_1 \subseteq E(H_0), K_2 \subseteq E(G) \\ \neg\conn_{K_1}(x,y), \conn_{K_1}(x,v^-), \conn_{K_1}(y,v^+) \\ \conn_{K_2}(v^-,v^+)}} \PP_{H_0, \mu}(K_1) \PP_{G, \mu}(K_2)   \\
			&= \left(\sum_{\substack{K \subseteq E(H_0) \\ \neg\conn_{K}(x,y), \conn_{K}(x,v^-), \conn_{K}(y,v^+)}} \PP_{H_0, \mu}(K)\right) \left(\sum_{\substack{K \subseteq E(G) \\ \conn_{K}(v^-,v^+)}} \PP_{G, \mu}(K) \right)\\
			& = \left( \sum_{\substack{K \subseteq E(H_0) \\ \neg\conn_{K}(x,y), \conn_{K}(x,v^-), \conn_{K}(y,v^+)}} \PP_{H_0, \mu}(K) \right)  \PP_{G, \mu}(v^- \sim_G v^+).  \numberthis \label{eq:A2_calc}
		\end{align*}
		Similarly, we have:
		\begin{align*}	
			\PP_{F, \mu}(A_3) = \sum_{K \in A_3} \PP_{F, \mu}(K) = \left( \sum_{\substack{K \subseteq E(H_0) \\ \neg\conn_{K}(x,y), \conn_{K}(x,v^+), \conn_{K}(y,v^-)}} \PP_{H_0, \mu}(K) \right) \PP_{G, \mu}(v^- \sim_G v^+). \numberthis \label{eq:A3_calc}
		\end{align*}
		Now, let $B $ be the event $ (x \sim_H y) $. 
		Setting
		\begin{align*}		
			& B_0 = \{ K \subseteq E(H) \mid \neg\conn_{K\cap E(H_0)}(x,y), \conn_K(x,y) \}
			\\
			\text{and } & B_1 = \{ K \subseteq E(H) \mid \conn_{K\cap E(H_0)}(x,y) \},
		\end{align*}
		we have $B = B_0 \sqcup B_1$. 	
		Moreover, similar to before, we have $B_0 = B_2 \sqcup B_3$, where 
		\begin{align*}
			& B_2 = \{ K \subseteq E(H) \mid \neg\conn_{K\cap E(H_0)}(x,y), \conn_{K\cap E(H_0)}(x,v^-), \conn_{K\cap E(H_0)}(y,v^+), v^- v^+\in K \},
			\\
			\text{and } & B_3 =   \{ K \subseteq E(H) \mid \neg\conn_{K\cap E(H_0)}(x,y), \conn_{K\cap E(H_0)}(x,v^+), \conn_{K\cap E(H_0)}(y,v^-), v^- v^+ \in K \}.
		\end{align*}
		Therefore, $B = B_1 \sqcup B_2 \sqcup B_3$, and 
		\begin{align*}
			\PP_{H, \mu'}(B)  = \PP_{H, \mu'}(B_1) + \PP_{H, \mu'}(B_2) + \PP_{H, \mu'}(B3). \numberthis \label{eq:Bsplit}
		\end{align*}
		Notice that:
		\begin{align*}
			\PP_{H, \mu'}(B_1) = \sum_{K \in B_1} \PP_{H, \mu'}(K) & =  \sum_{\substack{K_1  \subseteq E(H_0) \\ K_2 \subseteq \{v^- v^+\} \\ \conn_{K_1}(x,y)}} \PP_{H, \mu'}(K_1 \sqcup K_2) \\
			& = \sum_{\substack{K_1  \subseteq E(H_0) \\ K_2 \subseteq \{v^-v^+\} \\ \conn_{K_1}(x,y)}} \PP_{H_0, \mu'}(K_1) \PP_{\{v^- v^+\}, \mu'}(K_2)   \\
			& = \left( \sum_{\substack{K  \subseteq E(H_0) \\ \conn_{K}(x,y)}} \PP_{H_0, \mu'}(K)\right) \left(\sum_{K \subseteq \{v^- v^+\}} \PP_{\{v^- v^+\}, \mu'} (K) \right) \\
			& = \left(\sum_{\substack{K  \subseteq E(H_0) \\ \conn_{K}(x,y)}} \PP_{H_0, \mu}(K) \right). 1 \tag{since $ \mu'|_{H_0} = \mu|_H $} \\
			& = \PP_{H_0, \mu} (x \sim_{H_0} y). \numberthis \label{B1_calc}
		\end{align*}
		Moreover, we have:
		\begin{align*}
			\PP_{H, \mu'}(B_2) = \sum_{K \in B_2} \PP_{H, \mu'}(K) &= \sum_{\substack{K_1 \subseteq E(H_0) \\ \neg\conn_{K_1}(x,y), \conn_{K_1}(x,v^-), \conn_{K_1}(y,v^+) }} \PP_{H, \mu'}(K_1 \sqcup \{v^-v^+\}) \\
			& = \sum_{\substack{K_1 \subseteq E(H_0)\\ \neg\conn_{K_1}(x,y), \conn_{K_1}(x,v^-), \conn_{K_1}(y,v^+) }} \PP_{H_0, \mu'}(K_1) \PP_{\{v^-v^+\}, \mu'}(\{v^-v^+\})  \\
			& = \left( \sum_{\substack{K \subseteq E(H_0) \\ \neg\conn_{K}(x,y), \conn_{K}(x,v^-), \conn_{K}(y,v^+) }} \PP_{H_0, \mu}(K) \right) \mu'(v^-v^+) \tag{using $ \mu'|_{H_0} = \mu|_H $} \\
			& = \left( \sum_{\substack{K \subseteq E(H_0) \\ \neg\conn_{K}(x,y), \conn_{K}(x,v^-), \conn_{K}(y,v^+)}} \PP_{H_0, \mu}(K) \right)  \PP_{G, \mu}(v^- \sim_G v^+),  \numberthis \label{eq:B2_calc}
		\end{align*}
		where in the last line we have used the definition of $\mu'$.	
		
		Similarly, we obtain:
		\begin{align*}
			\PP_{H, \mu'}(B_3) =  \sum_{K \in B_3} \PP_{H, \mu'}(K) = 
			\left( \sum_{\substack{K \subseteq E(H_0) \\ \neg\conn_{K}(x,y), \conn_{K}(x,v^+), \conn_{K}(y,v^-)}} \PP_{H_0, \mu}(K) \right)  \PP_{G, \mu}(v^- \sim_G v^+).  \numberthis \label{eq:B3_calc}
		\end{align*}	
		
		Now, by \Cref{eq:A1_calc} and \Cref{B1_calc}, we have  $ \PP_{F, \mu}(A_1) =  \PP_{H, \mu'}(B_1) $, by \Cref{eq:A2_calc} and \Cref{eq:B2_calc}, we have $ \PP_{F, \mu}(A_2) = \PP_{H, \mu'}(B_2) $, and finally by \Cref{eq:A3_calc} and \Cref{eq:B3_calc}, we have $ \PP_{F, \mu}(A_3) = \PP_{H, \mu'}(B_3) $. Therefore, using \Cref{eq:Asplit} and \Cref{eq:Bsplit}, we deduce
		$\PP_{F, \mu}(x \sim_F y) =  \PP_{H, \mu'}(x \sim_H y)$, which completes the proof.
	\end{proof}

	\begin{lemma} \label{lem:mainthm-case1}
		Let $\mu$ be a symmetric wheight on $F$. 
		Assume that $(\overline H, \nu)$ satisfies the bunkbed conjecture for all symmetric wheights $\nu$ on $F$ that coincide with $\mu$ on horizontal edges. 
		Then for every $x, y \in V(\overline H)$, we have $\PP_{F, \mu}(x^- \sim_F y^+) \leq \PP_{F, \mu}(x^- \sim_F y^-)$. 
	\end{lemma}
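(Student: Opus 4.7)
The plan is to deduce this lemma directly from \Cref{lem:change_weight} together with the assumption that $\overline H$ satisfies the bunkbed conjecture. The key point is that \Cref{lem:change_weight} translates connection probabilities in $(F,\mu)$ into connection probabilities in $(H,\mu')$, and $H$ is itself the bunkbed of $\overline H$, so provided $\mu'$ is a symmetric weight on $H$, the bunkbed conjecture for $\overline H$ applies verbatim.

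First, I would fix $x,y \in V(\overline H)$ and observe that $x^-, y^-, y^+$ all lie in $V(H)$, so \Cref{lem:change_weight} yields
\[
\PP_{F,\mu}(x^- \sim_F y^-) = \PP_{H,\mu'}(x^- \sim_H y^-)
\quad \text{and} \quad
\PP_{F,\mu}(x^- \sim_F y^+) = \PP_{H,\mu'}(x^- \sim_H y^+),
\]
where $\mu'$ is the weight on $H$ defined in the statement of \Cref{lem:change_weight}.

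Next, I would verify that $\mu'$ is a symmetric weight on $H = BB(\overline H)$. The symmetry condition only constrains horizontal edges: for every edge $xy \in E(\overline H)$ one needs $\mu'(x^-y^-) = \mu'(x^+y^+)$. Since horizontal edges of $H$ lie in $H_0$, the definition of $\mu'$ gives $\mu'(x^-y^-) = \mu(x^-y^-)$ and $\mu'(x^+y^+) = \mu(x^+y^+)$, and these are equal because $\mu$ is by assumption a symmetric weight on $F = BB(\overline F)$. The value $\mu'(v^-v^+) = \PP_{G,\mu|_G}(v^- \sim v^+)$ on the vertical edge plays no role in the symmetry condition, so $\mu'$ is indeed symmetric.

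Finally, since $\overline H$ satisfies the bunkbed conjecture and $\mu'$ is a symmetric weight on $BB(\overline H) = H$, the conjecture applied to the pair $(\overline H, \mu')$ gives
\[
\PP_{H,\mu'}(x^- \sim_H y^+) \leq \PP_{H,\mu'}(x^- \sim_H y^-).
\]
Combined with the two equalities above, this yields the claimed inequality. There is no real obstacle here beyond the verification of symmetry of $\mu'$; the entire content of the lemma has been absorbed into \Cref{lem:change_weight}, which is exactly why that technical reduction was carried out first.
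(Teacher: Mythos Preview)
Your proof is correct and follows exactly the same approach as the paper's own proof: apply \Cref{lem:change_weight} to pass from $(F,\mu)$ to $(H,\mu')$, then invoke the bunkbed conjecture for $\overline H$. In fact you are more explicit than the paper, which leaves the verification that $\mu'$ is symmetric implicit in the phrase ``by assumption on $H$''; your added check that the horizontal-edge values of $\mu'$ inherit symmetry from $\mu$ is a welcome clarification.
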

	
	\begin{proof}
		By Lemma~\ref{lem:change_weight} and by assumption on $H$, since $\mu'$ coincides with $\mu$ on horizontal edges (actually it coincides with $\mu$ on all edges but one vertical edge), we have that
		\[ \PP_{F,\mu}(x^- \sim_F y^+) = \PP_{H,\mu'}(x^-\sim_H y^+) \leq \PP_{H,\mu'}(x^-\sim_H y^-) = \PP_{F,\mu}(x^-\sim_F y^-),\]
		as desired.
	\end{proof}
	
	\subsection{Second case}
	
	We now study the case when $x$ and $y$ are on two different sides of the cut vertex, that is, $ x \in V(\overline G) $ and $ y \in V(\overline H)$ (up to renaming).
	
	\begin{lemma} \label{lem:breakP(xy)to2sides}
		Let $\mu$ be a weight on $F$. 
		Then, for every $x \in V(G)$ and $y \in V(H)$, we have: 
		\begin{align*}
			\PP_{F, \mu}(x \sim_F y) =\  & \PP_{G, \mu}(x \sim_G v^-)\PP_{H_0, \mu}(v^- \sim_{H_0} y) + \PP_{G, \mu}(x \sim_G v^+)\PP_{H_0, \mu}(v^+ \sim_{H_0} y) \\
			& - \PP_{G, \mu}(x \sim_G v^- \cap x \sim_G v^+)\PP_{H_0, \mu}(v^- \sim_{H_0} y \cap v^+ \sim_{H_0} y).
		\end{align*}
		
	\end{lemma}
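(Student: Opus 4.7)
The plan is to decompose the event $(x \sim_F y)$ according to which of $v^-, v^+$ a path uses to cross from $G$ to $H_0$. Three preliminary facts set the stage. First, $E(F) = E(G) \sqcup E(H_0)$, since the only edge of $E(G) \cap E(H)$ is $v^-v^+$, which lies in $E(G)$ and is removed from $H$ to form $H_0$. Second, $V(G) \cap V(H_0) = \{v^-, v^+\}$, and no edge of $E(F)$ joins $V(G) \setminus V(H_0)$ to $V(H_0) \setminus V(G)$ (such an edge could lie neither in $E(G)$ nor in $E(H_0)$). Third, writing any $K \subseteq E(F)$ uniquely as $K = K_G \sqcup K_H$ with $K_G = K \cap E(G)$ and $K_H = K \cap E(H_0)$, the product form of $\PP_{F,\mu}$ gives $\PP_{F,\mu}(K) = \PP_{G,\mu}(K_G)\,\PP_{H_0,\mu}(K_H)$, so events depending only on $K_G$ are independent of those depending only on $K_H$, with marginals $\PP_{G,\mu}$ and $\PP_{H_0,\mu}$.

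I would then introduce the events $A^p = \{K \subseteq E(F) : \conn_{K_G}(x, v^p) \text{ and } \conn_{K_H}(v^p, y)\}$ for $p \in \{-,+\}$, and prove the set equality $(x \sim_F y) = A^- \cup A^+$. The inclusion $\supseteq$ is immediate by concatenation. For $\subseteq$, consider the main case $x \in V(G) \setminus \{v^-,v^+\}$ and $y \in V(H_0) \setminus \{v^-,v^+\}$ (the boundary cases are analogous). Since the $K$-connected component of $x$ is built by iteratively merging $K_G$- and $K_H$-components that share a vertex, and since such sharing can only occur at $V(G) \cap V(H_0) = \{v^-, v^+\}$, the hypothesis $\conn_K(x,y)$ forces $\conn_{K_G}(x, v^q)$ for some $q \in \{-,+\}$ and, symmetrically, $\conn_{K_H}(v^p, y)$ for some $p \in \{-,+\}$. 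If $p = q$ we are in $A^p$. Otherwise take a shortest $K$-path from $v^q$ to $v^p$; its interior avoids $\{v^-,v^+\}$ (being simple with endpoints $\{v^-, v^+\}$), so by the second preliminary fact it lies entirely on one side, yielding either $\conn_{K_G}(v^q, v^p)$ (hence $\conn_{K_G}(x, v^p)$ and $K \in A^p$) or $\conn_{K_H}(v^q, v^p)$ (hence $\conn_{K_H}(v^q, y)$ and $K \in A^q$).

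With the set equality in hand, inclusion-exclusion gives
\[
\PP_{F,\mu}(x \sim_F y) = \PP_{F,\mu}(A^-) + \PP_{F,\mu}(A^+) - \PP_{F,\mu}(A^- \cap A^+),
\]
and each term factors via the independence noted above: $\PP_{F,\mu}(A^p) = \PP_{G,\mu}(x \sim_G v^p)\,\PP_{H_0,\mu}(v^p \sim_{H_0} y)$ and $\PP_{F,\mu}(A^- \cap A^+) = \PP_{G,\mu}(x \sim_G v^- \cap x \sim_G v^+)\,\PP_{H_0,\mu}(v^- \sim_{H_0} y \cap v^+ \sim_{H_0} y)$. Substituting these into the inclusion-exclusion identity yields the stated formula. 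The main obstacle is the set equality $(x \sim_F y) = A^- \cup A^+$: the delicate case is that of paths visiting both $v^-$ and $v^+$, where one must exploit that $v^-v^+$ is the unique $E(F)$-edge joining these two vertices and that it belongs to $E(G)$ but not to $E(H_0)$.
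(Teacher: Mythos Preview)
Your proposal is correct and follows essentially the same approach as the paper: define the two events $A^-$, $A^+$ (the paper's $A_1$, $A_2$), establish $(x\sim_F y)=A^-\cup A^+$, apply inclusion--exclusion, and factor each term using the decomposition $E(F)=E(G)\sqcup E(H_0)$. The only difference is emphasis: the paper dispatches the set equality in one line by invoking that $v$ is a cut vertex of $\overline F$, whereas you supply a more detailed path argument handling the case $p\neq q$; conversely, the paper spells out the factorisation as explicit sums while you invoke independence more abstractly.
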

	
	\begin{proof}
		Let $A $ be the event $ (x\sim_F y) $. Set 
		\begin{align*}
			& A_1 = \{ K\subseteq E(F) \mid \conn_{K\cap E(G)}(x,v^-), \conn_{K\cap E(H_0)}(v^-,y)\} \\
			\text{and } & A_2 = \{ K\subseteq E(F) \mid \conn_{K\cap E(G)}(x,v^+), \conn_{K\cap E(H_0)}(v^+,y)\}.
		\end{align*}
		It is clear that $A_1\cup A_2\subseteq A$, and, since $v$ is a cut vertex in $\overline F$, we obtain that $A = A_1\cup A_2$. 
		Thus by the inclusion-exclusion formula we have $\PP_{F,\mu}(A) = \PP_{F,\mu}(A_1) + \PP_{F,\mu}(A_2) - \PP_{F,\mu}(A_1\cap A_2).$ Let us compute each term.
		
		First, we have:
		\begin{align*}
			\PP_{F,\mu}(A_1) & = \sum_{ \substack{K \subseteq E(F) \\ \conn_{K \cap E(G)}(x,v^-) \\ \conn_{K \cap E(H_0)}(v^-,y) }} \PP_{F, \mu}(K)  \\
			& = \sum_{ \substack{K_1 \subseteq E(G) \\ \conn_{K_1}(x,v^-) \\  K_2 \subseteq E(H_0) \\ \conn_{K \cap E(H_0)}(v^-,y) }} \PP_{F, \mu} (K_1 \sqcup K_2) \tag{setting $ K_1 = K \cap E(G) $ and $ K_2 = K \cap E(H_0) $} \\
			& = \left( \sum_{ \substack{K_1 \subseteq E(G) \\ \conn_{K_1}(x,v^-) }} \PP_{G, \mu}(K_1) \right) \left( \sum_{ \substack{ K_2 \subseteq E(H_0) \\ \conn_{K \cap E(H_0)}(v^-,y) }} \PP_{H_0, \mu}(K_2) \right)  \\
			& = \PP_{G,\mu}(x\sim_G v^-)\PP_{H_0,\mu}(v^-\sim_{H_0} y),
		\end{align*}
		as desired. 
		Similarly, we have that 
		$\PP_{F,\mu}(A_2) = \PP_{G,\mu}(x\sim_G v^+)\PP_{H_0,\mu}(v^+\sim_{H_0} y).$
		
		Finally, we have:
		$$A_1\cap A_2 = \{K\subseteq E(F) \mid \conn_{K\cap E(G)}(x,v^-), \conn_{K\cap E(G)}(x,v^+), \conn_{K\cap E(H_0)}(v^-,y), \conn_{K\cap E(H_0)}(v^+,y)\}.$$ 
		A similar computation leads to $ \PP_{F,\mu}(A_1\cap A_2) = \PP_{G,\mu}(x\sim_G v^- \cap x\sim_G v^+)\PP_{H_0,\mu}(v^-\sim_{H_0} y\cap v^+\sim_{H_0} y) $,
		which concludes the proof.
	\end{proof}
	
	\begin{lemma} \label{lem:mainthm-case2}
		Let $\mu$ be a symmetric weight on $F$. 
		Assume that $(\overline G,\alpha)$ and $(\overline H,\beta)$ satisfy the bunkbed conjecture for all symmetric wheights $\alpha \colon E(G) \to [0,1]$ and $\beta \colon E(H) \to [0,1]$ which coincide with $\mu$ on horizontal edges. 
		Then $\PP_{F, \mu}(x^- \sim_F y^+) \leq \PP_{F, \mu}(x^- \sim_F y^-)$ for all $x\in V(\overline G)$ and $y\in V(\overline H)$. 
	\end{lemma}
	
	\begin{proof}
		Using \Cref{lem:breakP(xy)to2sides} twice, we have:
		\begin{align*}
			\PP_{F, \mu}(x^- \sim_F y^+) = \ & \PP_{G, \mu}(x^- \sim_G v^-)\PP_{H_0, \mu}(v^- \sim_{H_0} y^+) + \PP_{G, \mu}(x^- \sim_G v^+)\PP_{H_0, \mu}(v^+ \sim_{H_0} y^+) \\
			&- \PP_{G, \mu}(x^- \sim_G v^- \cap x^- \sim_G v^+)\PP_{H_0, \mu}(v^- \sim_{H_0} y^+ \cap v^+ \sim_{H_0} y^+),
		\end{align*}
		and 
		\begin{align*}
			\PP_{F, \mu}(x^- \sim_F y^-) = \ & \PP_{G, \mu}(x^- \sim_G v^-)\PP_{H_0, \mu}(v^- \sim_{H_0} y^-) + \PP_{G, \mu}(x^- \sim_G v^+)\PP_{H_0, \mu}(v^+ \sim_{H_0} y^-) \\ &- \PP_{G, \mu}(x^- \sim_G v^- \cap x^- \sim_G v^+)\PP_{H_0, \mu}(v^- \sim_{H_0} y^- \cap v^+ \sim_{H_0} y^-).
		\end{align*}
		Subtracting the two sides of the two equations, we have: 
		\begin{align*}
			& \PP_{F, \mu}(x^- \sim_F y^-) - \PP_{F, \mu}(x^- \sim_F y^+)  \\
			& = \PP_{G, \mu}(x^- \sim_G v^-) \left( \PP_{H_0, \mu}(v^- \sim_{H_0} y^-) - \PP_{H_0, \mu}(v^- \sim_{H_0} y^+) \right) \\
			& + \PP_{G, \mu}(x^- \sim_G v^+) \left( \PP_{H_0, \mu}(v^+ \sim_{H_0} y^-) - \PP_{H_0, \mu}(v^+ \sim_{H_0} y^+) \right) \\
			& - \PP_{G,\mu}(x^- \sim_G v^- \cap x^- \sim_G v^+) \left(\PP_{H_0,\mu}(v^- \sim_{H_0} y^- \cap v^+ \sim_{H_0} y^-) - \PP_{H_0,\mu}(v^-\sim_{H_0} y^+ \cap v^+\sim_{H_0} y^+) \right). \numberthis \label{proof-of-lemma-case2-mid-eq}
		\end{align*}
		Now, since $ \mu $ is symmetric, the last term is 0. 
		Again by symmetry, the two first summands are opposite, since:
		\begin{align*}
			\PP_{H_0, \mu}(v^- \sim_{H_0} y^-) - \PP_{H_0, \mu}(v^- \sim_{H_0} y^+) & = \PP_{H_0, \mu}(v^+ \sim_{H_0} y^+) - \PP_{H_0, \mu}(v^+ \sim_{H_0} y^-)  \\
			& = - \left( \PP_{H_0, \mu}(v^+ \sim_{H_0} y^-) - \PP_{H_0, \mu}(v^+ \sim_{H_0} y^+) \right).
		\end{align*}
		Therefore, \Cref{proof-of-lemma-case2-mid-eq} is equal to: 
		\begin{align*}
			\left( \PP_{H_0, \mu}(v^- \sim_{H_0} y^-) - \PP_{H_0, \mu}(v^- \sim_{H_0} y^+) \right) \left( \PP_{G,\mu}(x^- \sim_G v^-) - \PP_{G,\mu}(x^- \sim_G v^+) \right).
		\end{align*}
		
		By assumption, the second expression is nonnegative. 
		Let us show the first expression is also nonnegative. 
		Let $\mu' \colon E(H) \to [0,1]$ be equal to $\mu$ on $E(H_0)$ and set $\mu'(v^-v^+) = 0$. 
		Notice that $\mu'$ is a symmetric weight on $H$ and that, for $K \subseteq E(H_0)$, we have:
		$$
		\PP_{H,\mu'}(K) =
		\begin{cases}
			0 & v^- v^+ \in K \\
			\PP_{H_0,\mu}(K) & \text{ if } K \subseteq E(H_0) 
		\end{cases}.
		$$
		Therefore, we have:
		\begin{align*}
			\PP_{H, \mu'}(v^- \sim_H y^-) = \sum_{ \substack{ K\subseteq E(H)  \\ \conn_K(v^- , y^-)}} \PP_{H,\mu'} (K)
			&  = \sum_{ \substack{ K\subseteq E(H_0) \\ \conn_K(v^- , y^-) }} \PP_{H, \mu'}(K) + \sum_{ \substack{ K \subseteq E(H),   v^-v^+ \in K \\ \conn_K(v^- , y^-) }} \PP_{H, \mu'}(K) \\
			& = \sum_{ \substack{ K\subseteq E(H_0) \\ \conn_K(v^- , y^-) }} \PP_{H_0, \mu}(K) + 0 \\ 
			& = \PP_{H_0, \mu}(v^- \sim_{H_0} y^-).
		\end{align*}
		Similarly, $ \PP_{H, \mu'}(v^- \sim_H y^+) = \PP_{H_0, \mu}(v^- \sim_{H_0} y^+) $.

		Therefore, 
		$ \PP_{H_0, \mu}(v^- \sim_{H_0} y^-) - \PP_{H_0, \mu}(v^- \sim_{H_0} y^+) = \PP_{H, \mu'}(v^- \sim_{H} y^-) - \PP_{H, \mu'}(v^- \sim_{H} y^+) $, which is nonnegative by assumption since $\mu'$ coincides with $\mu$ on horizontal edges. 
		This shows that \Cref{proof-of-lemma-case2-mid-eq} is nonnegative and concludes the proof.
	\end{proof}
	
	\subsection{Main theorem and corollaries}\label{subsec:main_thm}	
	
	Let us now proceed to state and prove our mains results. 
	
	\begin{theorem} \label{thm:main-vertex-cut}
		Let $A$ and $B$ be two graphs, and fix $a \in V(A)$ and $b\in V(B)$. 
		Let $D$ be the graph obtained from $A$ and $B$ by identifying $a$ and $b$, and let $\mu \colon E(BB(D)) \to [0,1]$ be a symmetric wheight. 
		
		If $(A,\alpha)$ and $(B,\beta)$ satisfy the bunkbed conjecture for every wheights $\alpha \colon E(BB(A)) \to [0,1]$ and $\beta \colon E(BB(B)) \to [0,1]$ which coincide with $\mu$ on horizontal edges, then $(D,\mu)$ satisfies the bunkbed conjecture. 
		
		In particular, if $A$ and $B$ satisfy the general (resp.\ semihomogeneous) bunkbed conjecture, then $D$ satisfies the general (resp.\ semihomogeneous) bunkbed conjecture.
	\end{theorem}
	
	\begin{proof}
		When applying previous lemmas, we take $\overline G = A$, $\overline H = B$, and $\overline F = D$. 
		Also, $v $ corresponds to the vertex obtained after identifying $a$ and $b$. 
		
		Let $\mu \colon E(D) \to [0,1]$ be a symmetric weight and let $x$ and $y\in V(D)$. 
		Up to renaming, two cases are possible: either $x$ and $y$ are both vertices of $B$, or $x$ is a vertex of $A$ and $y$ is a vertex of $B$. 
		In the former case, since $B$ satisfies the general (resp.\ semihomogeneous) bunkbed conjecture, by \Cref{lem:mainthm-case1}, we have that $\PP_{D,\mu}(x^-\sim_D y^+) \leq \PP_{D,\mu}(x^-\sim_D y^+)$, as desired. 
		In the latter case, since $A$ and $B$ satisfy the general (resp.\ semihomogeneous) bunkbed conjecture, by \Cref{lem:mainthm-case2}, we have the desired inequality. 
		This concludes the proof of the theorem.
	\end{proof}
	
	The attentive reader will notice that one could weaken even more the assumptions in the main theorem, since technically speaking the lemmas only need the value of the wheight of the vertical edge above the cut vertex to be modified, and, by conditioning, one could even restrict that value to be 0 or 1. 
	
	We now obtain immediately the following corollaries. 
	
	\begin{corollary} \label{cor:1}
		If $ G$ is a minimal counterexample to the general (resp.\ semihomogeneous) bunkbed conjecture, then it is 2-connected. 
	\end{corollary}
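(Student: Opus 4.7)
The plan is to argue by contrapositive and prove the form stated in the abstract: a minimal counterexample $G$ to the bunkbed conjecture must be 2-connected, i.e.\ $G$ must be connected and free of cut vertices. By ``minimal'' I mean a counterexample with the smallest possible number of vertices.

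First, I would rule out the possibility that $G$ is disconnected. If $G = G_1 \sqcup G_2$ with both parts non-empty, then $BB(G) = BB(G_1) \sqcup BB(G_2)$, and any symmetric weight on $BB(G)$ restricts to symmetric weights on each $BB(G_i)$. Since no path in $BB(G)$ can cross between components, for $x, y$ in the same part $V(G_i)$ the event $(x^- \sim_{BB(G)} y^{\pm})$ coincides with $(x^- \sim_{BB(G_i)} y^{\pm})$, and for $x, y$ in different parts both probabilities are zero. Hence the bunkbed conjecture holding for $G_1$ and $G_2$ would imply it for $G$; by minimality one of $G_1, G_2$ is a strictly smaller counterexample, a contradiction.

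For the main step, I would suppose $G$ is connected and has a cut vertex $v$. Then $G$ can be written as the graph obtained by gluing two graphs $A$ and $B$ along a vertex (corresponding to $v$), each of which has strictly fewer vertices than $G$, since each of the two pieces has at least two vertices. By minimality of $G$, both $A$ and $B$ satisfy the bunkbed conjecture, and then \Cref{thm:main-vertex-cut} yields that $G$ also satisfies it, contradicting that $G$ is a counterexample. Combined with the first step, this shows $G$ is 2-connected.

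I do not foresee any real obstacle here: the corollary is essentially a direct application of \Cref{thm:main-vertex-cut}, and the disconnected case reduces trivially. The only care needed is to observe that when $G$ is connected and $v$ is a cut vertex, the two pieces $A$ and $B$ obtained by splitting at $v$ are both strictly smaller than $G$, so the minimality hypothesis genuinely applies to each of them.
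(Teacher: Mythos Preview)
Your proof is correct and follows essentially the same approach as the paper, which simply records the corollary as ``immediate from \Cref{thm:main-vertex-cut}.'' You have supplied the details the paper omits: you make explicit that the statement concerns a \emph{minimal} counterexample (as the abstract says, and as is necessary for the corollary to be true at all), and you handle the disconnected case separately before invoking the theorem for the cut-vertex case.
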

	
	This corollary can be strengthened as follows.
	
	\begin{corollary} \label{cor:2}
		Let $\mathcal C$ be a class of graphs stable under taking 2-connected components. 
		If the general (resp.\ semihomogeneous) bunkbed conjecture holds for the 2-connected graphs in $\mathcal C$, then it holds for every graph in $\mathcal C$. 
	\end{corollary}
	
	\Cref{cor:2} implies the validity of the conjecture in the following cases. 
	Recall that a forest is a graph without cycles, and that a block graph is a graph whose 2-connected components are complete. 
	
	\begin{corollary} \label{cor:3}
		The general bunkbed conjecture is true for forests, and the semihomogeneous bunkbed conjecture is true for block graphs.
	\end{corollary}
	
	\begin{proof}
		By~\Cref{cor:2}, the fact that the general bunkbed conjecture is true for $K_2$ implies that it is true for forests, since $K_2$ is the only 2-connected forest. 
		
		The 2-connected block graphs are exactly complete graphs, and the semihomogeneous bunkbed conjecture is true for complete graphs by the main result of~\cite{HintLamm2019} (the result is stated for the case where vertical edges have wheight 0 or 1, which corresponds to conditioning on which vertical edges are present, so it implies the semihomogeneous case as well). 
		Thus the semihomogeneous bunkbed conjecture is true for block graphs.
	\end{proof}

	\section*{Acknowledgements}
	We warmly thank Marc Becker for introducing the problem that led to this work.

\end{document}